\author{Rafael Torres}
\title[Einstein metrics and smooth structures on $3\mathbb{CP}^2 \# k\overline{\mathbb{CP}^2}$]{On Einstein metrics, normalized Ricci flow and smooth structures on $3\mathbb{CP}^2 \# k\overline{\mathbb{CP}^2}$.}
\address{California Institute of Technology - Mathematics\\ 1200 E California Blvd\\91125\\Pasadena, CA}
\email{rtorresr@caltech.edu}
\date{September 2, 2010. We thank D. Kotschick for pointing out some useful references.}
\keywords{$\mathbb{Q}$-Gorenstein smoothings, surfaces of general type, exotic smooth structures, Einstein metrics}
\subjclass[2010]{Primary 53C25; Secondary 57R55}
\theoremstyle{plain}
\newtheorem{theorem}[equation]{Theorem}
\newtheorem{corollary}[equation]{Corollary}
\newtheorem{proposition}[equation]{Proposition}
\newtheorem{lemma}[equation]{Lemma}
\newtheorem{remark}{Remark}
\theoremstyle{definition}
\newcommand{\Q}{\mathbb{Q}}
\begin{document}

\maketitle

In this paper, first we consider the existence and non-existence of Einstein metrics on the topological 4-manifolds $3\mathbb{CP}^2 \# k \overline{\mathbb{CP}^2}$ (for $k \in \{11, 13, 14, 15, 16, 17, 18\}$) by using the idea of \cite{[RS]} and the constructions in \cite{[PPS]}. Then, we study the existence or non-existence of non-singular solutions of the normalized Ricci flow on the exotic smooth structures of these topological manifolds by employing the obstruction developed in \cite{[MI]}.

%\tableofcontents

\section{Introduction}

Recent years have witnessed a drastic increase in our understanding of the topology and geometry of 4-manifolds and complex surfaces. The newest developments can be exemplified by the construction of simply connected surfaces of general type with small topology \cite{[PPS2], [PPS]}, by the unveiling of a myriad of exotic smooth structures on small 4-manifolds \cite{[AP]}, and by how these manifolds have provided an adequate environment for the study of fundamental questions in Riemannian geometry that were previously out of reach.\\

In particular, intriguing questions regarding Einstein metrics (\cite{[RS]}), and the relation between smooth and geometric structures (like the Yamabe invariant and the normalized Ricci flow) on a given topological 4-manifold (\cite{[MI], [IRS]}) have been immediate beneficiaries of the novel constructions. In this paper we employ the procedure of R. R{\u a}sdeaconu and I. {\c S}uvaina (\cite{[RS]}) to the constructions of H. Park, J. Park and D. Shin (\cite{[PPS]}), and to those of A. Akhmedov and B.D. Park (\cite{[AP]}) to study the (non)-existence of Einstein metrics, and the (non)-existence of non-singular solutions to the normalized Ricci flow on small manifolds (although bigger than those considered in \cite{[RS]}).\\

Our main results are the following.

\begin{theorem} Let $k\in \{11,13, 14, 15, 16, 17, 18\}$. Each of the topological 4-manifolds
\begin{center}
$3\mathbb{CP}^2 \# k \overline{\mathbb{CP}^2}$
\end{center}
admits a smooth structure that has an Einstein metric of scalar curvature $s < 0$, and infinitely many non-diffeomorphic smooth
structures that do not admit Einstein metrics.
\end{theorem}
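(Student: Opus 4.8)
The plan is to establish the two assertions separately. \emph{Existence of an Einstein metric.} For each $k$ in the list I would take the minimal complex surface of general type $M_k$ produced by the $\mathbb{Q}$-Gorenstein smoothing constructions of \cite{[PPS]}. Since $M_k$ is simply connected with $\chi(\mathcal{O}_{M_k})=2$, its invariants are forced to be $p_g=1$, $q=0$, $K_{M_k}^2=19-k$, so that $(b_+,b_-)=(3,k)$; as $\sigma(M_k)=3-k$ is not divisible by $16$ for any $k$ in the list, Rokhlin's theorem shows $M_k$ is non-spin, and Freedman's theorem gives a homeomorphism $M_k\cong 3\mathbb{CP}^2\#k\overline{\mathbb{CP}^2}$. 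Arranging, as in \cite{[RS]}, that the smoothing is chosen so that $K_{M_k}$ is ample (equivalently, so that $M_k$ carries no $(-2)$-curve), the Aubin--Yau theorem endows $M_k$ with a Kähler--Einstein metric of scalar curvature $s<0$. This is the required smooth structure carrying an Einstein metric.

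\emph{An infinite family of exotic structures.} For the second assertion I would start from a minimal symplectic model $Z_k$ homeomorphic to $3\mathbb{CP}^2\#k\overline{\mathbb{CP}^2}$ as constructed in \cite{[AP]}. Being built by symplectic fiber sums, $Z_k$ contains a symplectic torus $F$ with $F^2=0$, with $[F]\neq0$ in $H_2$, and with $\pi_1(Z_k\setminus F)=1$; by adjunction $K_{Z_k}\cdot F=0$, and $K_{Z_k}^2=2\chi+3\sigma=19-k>0$. Performing Fintushel--Stern knot surgery along $F$ with the $(2,2n+1)$-torus knots $K_n$ yields 4-manifolds $Z_k(K_n)$, $n\in\mathbb{N}$, that remain simply connected and non-spin with unchanged $\chi$ and $\sigma$, hence are all homeomorphic to $3\mathbb{CP}^2\#k\overline{\mathbb{CP}^2}$. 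By the knot-surgery product formula together with Taubes' non-vanishing theorem, the Seiberg--Witten basic classes of $Z_k(K_n)$ are (up to sign and coefficients) exactly $K_{Z_k}+2jF$ for $|j|\le n$, so the $Z_k(K_n)$ are pairwise non-diffeomorphic for distinct $n$; in particular $a_n:=K_{Z_k}+2nF$ is a monopole class with $a_n^2=K_{Z_k}^2=19-k$.

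\emph{Ruling out Einstein metrics.} The obstruction is LeBrun's curvature estimate: if a closed oriented 4-manifold $N$ with $b_+\ge2$ admits an Einstein metric $g$ and $a$ is a monopole class whose self-dual part $a^+$ (with respect to $g$) is non-zero, then $(2\chi+3\sigma)(N)\ge\tfrac13(a^+)^2$. Apply this to $N=Z_k(K_n)$ and $a=a_n$: since $a_n^2=19-k>0$, the class $a_n$ cannot be anti-self-dual, so $a_n^+\neq0$. Moreover $[F]$ is a non-zero class of square zero, hence neither self-dual nor anti-self-dual for any metric, so its anti-self-dual part satisfies $(F^-)^2<0$; writing $a_n^-=K_{Z_k}^-+2nF^-$ gives $(a_n^-)^2=(K_{Z_k}^-)^2+4n(K_{Z_k}^-\!\cdot F^-)+4n^2(F^-)^2\to-\infty$, so $(a_n^+)^2=a_n^2-(a_n^-)^2\to+\infty$ as $n\to\infty$. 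Thus for $n$ sufficiently large LeBrun's inequality is violated, its right-hand side exceeding the fixed value $19-k$, and $Z_k(K_n)$ admits no Einstein metric. The manifolds $\{Z_k(K_n):n\gg0\}$ then form an infinite family of pairwise non-diffeomorphic smooth structures on $3\mathbb{CP}^2\#k\overline{\mathbb{CP}^2}$, none admitting an Einstein metric, which completes the proof.

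\emph{Main difficulty.} I expect the essential work to be in the two geometric constructions rather than in the Riemannian estimates: on the Kähler side, confirming that the surfaces of \cite{[PPS]} can be taken with ample canonical bundle, so that Aubin--Yau applies directly and not merely to the singular canonical model; and on the symplectic side, producing inside the models of \cite{[AP]} a self-intersection-zero torus with simply connected complement representing a non-trivial homology class, since a generic surface of general type contains no such torus and it is exactly the fiber-sum structure of those examples that makes Fintushel--Stern surgery both homeomorphism-preserving and effective on Seiberg--Witten invariants.
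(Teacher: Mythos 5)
Your first half (existence) is essentially the paper's argument: take the Park--Park--Shin surfaces with $p_g=1$, $q=0$, $K^2=19-k$, identify the homeomorphism type via Freedman/Rokhlin, verify ampleness of the canonical bundle by the R{\u a}sdeaconu--{\c S}uvaina method, and apply Aubin--Yau. That part is fine and coincides with Sections 2--3 of the paper.

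The second half, however, has a genuine gap, and in fact the strategy cannot work as stated. First, the input you assume is not available: \cite{[AP]} does not produce, for $k\in\{11,\dots,18\}$, minimal symplectic manifolds homeomorphic to $3\mathbb{CP}^2\#k\overline{\mathbb{CP}^2}$ containing a homologically essential square-zero torus with simply connected complement; you flag this as the ``main difficulty'' but never supply it, so the construction of the family $Z_k(K_n)$ is unproven. Second, and more seriously, the curvature argument is invalid: the decomposition $a_n=a_n^++a_n^-$ is taken with respect to the hypothetical Einstein metric on the \emph{fixed} manifold $Z_k(K_n)$, a metric that changes with $n$. There is no metric-independent lower bound on $|(F^-)^2|$, so the claim $(a_n^+)^2\to+\infty$ conflates a single metric with a sequence of different manifolds and metrics; for a given metric $g$ on $Z_k(K_n)$ the quantity $(F^-_g)^2$ can be arbitrarily close to $0$, and the best one can extract from the monopole classes $K\pm 2nF$ (e.g.\ by the averaging trick $a^++b^+=2K^+$) is $(a^+)^2\geq K^2=19-k$, which is exactly $(2\chi+3\sigma)(Z_k(K_n))$ and therefore does not violate LeBrun's Einstein inequality. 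Minimal manifolds with $c_1^2=2\chi+3\sigma>0$ simply cannot be excluded by this estimate. The paper avoids this by using \emph{non-minimal} manifolds: it takes the infinite family $\{X_n\}$ of exotic $3\mathbb{CP}^2\#4\overline{\mathbb{CP}^2}$'s of \cite{[AP]} (nontrivial Seiberg--Witten invariants, $(2\chi+3\sigma)(X_n)=15$) and blows up, $X_n\#(k-4)\overline{\mathbb{CP}^2}$ with $k-4\geq 7\geq 5=\tfrac13(2\chi+3\sigma)(X_n)$, so LeBrun's blow-up obstruction (Theorem 10) applies directly, while the Fintushel--Stern blow-up formula keeps the family pairwise non-diffeomorphic. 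To repair your proof you should replace the knot-surgery family by such blow-ups of exotic manifolds with small topology.
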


Regarding the non-singular solutions to the normalized Ricci flow on the exotic smooth structures of the manifolds from Theorem 1 and in the spirit of \cite{[IRS]}, the following result is proven.

\begin{proposition} The topological 4-manifold $M:= 3\mathbb{CP}^2 \# k \overline{\mathbb{CP}^2}$ satisfies the following properties
\begin{enumerate}
\item $M$ admits a smooth structure of negative Yamabe invariant on which there exist non-singular solutions to the normalized Ricci flow.
\item $M$ admits infinitely many smooth structures, all of which have negative Yamabe invariant, and on which there are no non-singular solutions to the normalized Ricci flow for any initial metric.
\end{enumerate}

\end{proposition}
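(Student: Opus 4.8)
The plan is to read both items off the smooth structures already produced in Theorem 1: item (1) from the Einstein structure together with the elementary fact that a negative Einstein metric is a stationary — hence non-singular — solution of the normalized Ricci flow, and item (2) from the infinitely many exotic structures of Theorem 1 together with the obstruction of \cite{[MI]}. Throughout, $k\in\{11,13,14,15,16,17,18\}$ as in Theorem 1.

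For item (1), I would take the smooth structure $X$ on $M=3\mathbb{CP}^2\#k\overline{\mathbb{CP}^2}$ that Theorem 1 equips with an Einstein metric $g$ of scalar curvature $s<0$; here $X$ is realized by a minimal complex surface of general type (a $\mathbb{Q}$-Gorenstein smoothing of a Park--Park--Shin surface, as in \cite{[RS]} and \cite{[PPS]}). Since $g$ is Einstein, $s$ is a negative constant, so the average scalar curvature equals $s$ and $g$ is a fixed point of the volume-normalized Ricci flow; consequently the constant path $g(t)\equiv g$, $t\in[0,\infty)$, is a non-singular solution. For the Yamabe invariant, LeBrun's theorem for minimal complex surfaces of general type gives $\mathcal{Y}(X)=-\sqrt{32\pi^2\,c_1^2(X)}$ with $c_1^2(X)=2\chi(M)+3\sigma(M)=19-k>0$, so $\mathcal{Y}(X)<0$. (Equivalently: $K_X$ is a monopole class of positive square, so LeBrun's curvature estimate $\int_X s_g^2\,dV_g\geq 32\pi^2\,c_1^2(X)$ for all metrics $g$ already forbids non-negative scalar curvature, forcing $\mathcal{Y}(X)<0$.)

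For item (2), I would take the infinitely many pairwise non-diffeomorphic smooth structures $\{X_j\}_{j\geq1}$ on $M$ provided by Theorem 1, none of which admits an Einstein metric. Each $X_j$ carries a non-trivial Seiberg--Witten invariant — this is precisely what distinguishes them in Theorem 1 — and in fact a monopole class of positive square, so the LeBrun estimate above applies verbatim to $X_j$ and yields $\mathcal{Y}(X_j)<0$ for every $j$. To exclude non-singular solutions of the normalized Ricci flow I would invoke \cite{[MI]}: since $b^+(X_j)=3\geq2$, the existence of such a solution would force an inequality of the form $2\chi(X_j)+3\sigma(X_j)\geq\tfrac{1}{3}\,(\text{Seiberg--Witten contribution})$, which is exactly the inequality shown to fail in the proof of the ``no Einstein metric'' half of Theorem 1. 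Hence no $X_j$ admits a non-singular solution of the normalized Ricci flow, for any initial metric.

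The step I expect to be the main obstacle is this last appeal to \cite{[MI]}: one must verify that the $X_j$ of Theorem 1 meet the hypotheses of the obstruction there and that its numerical inequality is strictly violated — in other words, that the \cite{[MI]}-obstruction to non-singular Ricci-flow solutions is, on these manifolds, at least as strong as the Einstein obstruction already used in Theorem 1. Since $2\chi+3\sigma=19-k$ is constant along the family while the Seiberg--Witten data of the $X_j$ forces a strictly larger lower bound, this should reduce to quoting the estimate of \cite{[MI]} in the form stated there and matching the constants; no geometric input beyond Theorem 1 and \cite{[MI]} is required.
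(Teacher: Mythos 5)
Your overall architecture is the same as the paper's, but the two halves are justified slightly differently, and in one place your version is actually closer to what the statement literally requires. For item (1), the paper does not use the Einstein metric as a fixed point: it quotes Cao's theorem (\cite{[C1]}, \cite{[C2]}) on the normalized K\"ahler--Ricci flow, taking as initial metric a K\"ahler metric whose K\"ahler class is that of the canonical bundle, to get a non-singular solution, and cites \cite{[LB2]} for the negative Yamabe invariant; your alternative --- observing that the K\"ahler--Einstein metric produced via Aubin--Yau is a stationary, hence non-singular, solution of the volume-normalized flow --- is correct and more elementary, though it yields only the static solution rather than convergence from a family of initial metrics. For item (2), the paper's proof in Section 5 argues with the minimal Akhmedov--Park manifolds $X_n$ (homeomorphic to $3\mathbb{CP}^2\#4\overline{\mathbb{CP}^2}$) and cites \cite{[LB]} for negativity of the Yamabe invariant and Ishida's Theorem B in \cite{[MI]} for the non-existence of non-singular solutions; the smooth structures actually living on $M=3\mathbb{CP}^2\#k\overline{\mathbb{CP}^2}$ are the blow-ups $X_n\#(k-4)\overline{\mathbb{CP}^2}$ (as in Proposition 11), and your treatment --- monopole classes of square $19-k>0$ on the blow-ups, LeBrun's estimate $\int s_g^2\,dV_g\geq 32\pi^2 a^2$ to force $\mathcal{Y}<0$, then the obstruction of \cite{[MI]} --- spells out exactly the step the paper leaves implicit. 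The issue you flag as the ``main obstacle'' (matching the numerical threshold in \cite{[MI]} with the Einstein threshold $r\geq\tfrac13(2\chi+3\sigma)(X_n)=5$ already used in Theorem 1) is precisely what the paper disposes of by direct citation: Ishida's Theorem B is formulated as the normalized-Ricci-flow analogue of LeBrun's blow-up obstruction for manifolds with non-trivial Seiberg--Witten invariants, so your argument reduces, as the paper's does, to quoting that statement together with the Fintushel--Stern blow-up formula to keep the family pairwise non-diffeomorphic.
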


We are also able to prove that for $k\geq 9$, each of the reducible manifolds of Theorem 1 have infinitely many smooth structures that do not carry an Einstein metric, all of which have negative Yamabe invariant, and on which the only solutions to the normalized Ricci flow for any initial metric are all singular (Proposition 11). Moreover, for $k\geq 8$, the manifolds of our theorem does not admit anti-self-dual Einstein metrics (Lemma 12). Theorem 1 and Proposition 2 extend the results in \cite{[RS]} and \cite{[IRS]}, and improve results of \cite{[LI]} and \cite{[BK]}.\\
 %(Proposition 10, Theorem 11 and Theorem 12).\\

The (non)-existence of Einstein metrics on different smooth structures on small blow ups of $3\mathbb{CP}^2$ was previously considered by V. Braungart and D. Kotschick in \cite{[BK]}. In that paper, the authors proved instances $k = 17$ and $k = 18$ of Theorem 1. By a result of F. Catanese \cite{[FC]}, in the case $k = 18$, the manifolds with K\"ahler-Einstein metrics in this paper and in \cite{[BK]} are diffeomorphic.\\

The paper is organized as follows. In Section 2 we determine the homeomorphism types of the complex surfaces built by H. Park, J. Park and D. Shin; the second part of the section provides a description of their surfaces. The Third Section contains the construction of an Einstein metric on each of these surfaces of general type. The non-existence of these metrics on the topological prototypes is addressed in Section 4. The proof of Theorem 1 is spread through out the first four sections. In Section 5, we study the sign of the Yamabe invariant and the solutions to the normalized Ricci flow on the exotic smooth structures. That is, Proposition 2 is proven in the fifth and last section.

%The paper is organized as follows. In Section 2, we establish the homeomorphism types of the manifolds built in %\cite{[PPS]} and in \cite{[PPS1]}.

\section{Homeomorphism type}

The following theorem was proven in \cite{[PPS]}.

\begin{theorem} (H. Park- J.Park - D. Shin). There exist simply connected minimal surfaces of general type with $p_g = 1$, $q = 0$ and $K^2 = 1, 2, 3, 4, 5,6, 8$.
\end{theorem}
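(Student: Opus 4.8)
The plan is to construct each of these surfaces by the method of $\mathbb{Q}$-Gorenstein smoothing of a singular rational surface, following the strategy introduced by Y. Lee and J. Park and refined in \cite{[PPS2], [PPS]}. Concretely, for each target value $K^2 = 1, 2, 3, 4, 5, 6, 8$, I would first produce a singular surface $X$ with quotient singularities of class $T$ (cyclic quotient singularities of type $\frac{1}{n^2}(1, na - 1)$) sitting on a rational elliptic surface or on a suitable blow-up of $\mathbb{CP}^2$. The singular surface is obtained by starting from a pencil of cubics (or an analogous linear system), blowing up base points and intersection points of carefully chosen singular fibers, and then contracting chains of rational curves whose configurations are read off from continued-fraction expansions; the self-intersection bookkeeping in these contractions is what pins down $K_X^2$.

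The key steps, in order, are: (1) exhibit the rational surface and the explicit curve configuration (chains of $(-2)$- and lower curves) to be contracted, verifying the intersection matrices are negative definite and of the required cyclic-quotient type; (2) contract them to get $X$ with $p_g(X) = 1$, $q(X) = 0$, and the desired $K_X^2$; (3) check that $X$ admits a $\mathbb{Q}$-Gorenstein smoothing — this is where one invokes the local criterion that singularities of class $T$ have $\mathbb{Q}$-Gorenstein smoothings together with a vanishing (obstruction) statement $H^2(X, T_X) = 0$ (or the global-to-local obstruction argument of Lee–Park) to ensure the local smoothings glob up; (4) conclude from the general theory (Kollár–Shepherd-Barron, Manetti) that the general fiber $X_t$ of the smoothing is a smooth surface with $p_g = 1$, $q = 0$, $K_{X_t}^2 = K_X^2$, and $K_{X_t}$ nef, hence of general type; (5) verify simple connectivity of $X_t$ — typically via van Kampen applied to a Milnor-fiber decomposition, showing the Milnor fibers of the $T$-singularities kill the fundamental group of the smooth locus — and minimality, which follows since $K_{X_t}$ is nef so $X_t$ contains no $(-1)$-curves.

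I expect the main obstacle to be step (3) together with step (5): proving that the $\mathbb{Q}$-Gorenstein smoothing actually exists (i.e.\ that the obstruction space vanishes or that the relevant obstruction map is surjective, which can require a delicate computation of $H^1$ and $H^2$ of the tangent sheaf of the singular surface, or an explicit deformation argument) and then proving that the resulting smooth fiber is simply connected (the $\pi_1$ computation via Milnor fibers is geometric but must be done configuration-by-configuration and is easy to get wrong). The values $p_g = 1$, $q = 0$ are inherited for free from the rational surface since these are deformation invariants in a $\mathbb{Q}$-Gorenstein family, so the genuinely new content is the smoothability and the topology of the smoothing for each of the seven cases; one handles $K^2 = 1, \dots, 6, 8$ individually (the case $K^2 = 7$ being conspicuously absent), reusing the same machinery with different starting configurations.
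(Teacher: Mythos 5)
Your overall strategy---contract negative-definite chains of rational curves to produce a projective surface $X$ with singularities of class $T$, show the local $\mathbb{Q}$-Gorenstein smoothings glue to a global one by an obstruction-vanishing argument in the style of Lee--Park, and then read off the invariants, minimality and $\pi_1$ of the general fiber $X_t$---is indeed the machinery used by Park--Park--Shin (the paper itself only cites their theorem and sketches their construction in Section 2.1, so the relevant comparison is with that sketch and with \cite{[PPS]}). However, there is a genuine error in your choice of starting surface, and it is fatal to the stated invariants. You propose to build the singular surface $X$ on ``a rational elliptic surface or a suitable blow-up of $\mathbb{CP}^2$'' and assert that $p_g = 1$, $q = 0$ are ``inherited for free from the rational surface.'' A rational surface has $p_g = 0$, and since the singularities being contracted are quotient (hence rational) singularities and the family is flat, $\chi(\mathcal{O})$ is constant through the contraction and the smoothing: starting from a rational surface forces $\chi(\mathcal{O}_{X_t}) = 1$, so with $q = 0$ the general fiber has $p_g = 0$, not $p_g = 1$. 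That is exactly the setup of the earlier $p_g = 0$ constructions (e.g.\ \cite{[PPS2]}), not of the theorem at hand.

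What Park--Park--Shin actually do, and what the paper's Section 2.1 describes for the $K^2 = 6$ case, is start from a surface with $\chi(\mathcal{O}) = 2$: they take a rational elliptic surface $E(1)$ obtained from a well-chosen cubic pencil, pass to its double cover $Y$, which is an elliptic $K3$ surface (so $p_g(Y) = 1$, $q(Y) = 0$), blow up $Y$ at suitably chosen points on prescribed singular fibers and sections (18 times for $K^2 = 6$), and only then contract the resulting disjoint linear chains and smooth. With that correction your remaining steps (negative-definiteness and $T$-type of the chains, local smoothability of $T$-singularities, the global-to-local obstruction computation, deformation invariance of $K^2$, nefness of $K_{X_t}$ giving minimality, and the case-by-case Milnor-fiber computation of $\pi_1$) do line up with the actual proof; but as written, your construction cannot produce any surface with $p_g = 1$, so the proposal does not prove the statement.
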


Here $K$ denotes the canonical divisor class of the complex surface. Our enterprise starts by pinning down a homeomorphism type for each of these complex surfaces. From now on, let $S$ be one of such surfaces. Surfaces of general type are K\"ahler (see, for example, \cite[Lemma 2]{[LB1]}). Thus, one has
\begin{center}
$b_2^+(S) = 2 p_g + 1 = 3$.\\
\end{center}

On the other hand, we have

\begin{lemma} $b_2^-(S) = 19 - c_1^2(S)$.
\end{lemma}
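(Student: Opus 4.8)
The plan is to compute $b_2^-(S)$ from the standard characteristic-number identities for a complex surface, using the data provided by Theorem 4 together with the fact, already noted, that $b_2^+(S)=3$. Since $S$ is simply connected, we have $b_1(S)=0$, so the Euler characteristic is $e(S)=2+b_2(S)=2+b_2^+(S)+b_2^-(S)=5+b_2^-(S)$. Separately, Noether's formula gives $\chi(\mathcal{O}_S)=\frac{1}{12}(c_1^2(S)+c_2(S))$, where $c_2(S)=e(S)$ and $\chi(\mathcal{O}_S)=1-q+p_g=1-0+1=2$ by the hypotheses $q=0$, $p_g=1$.

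The key steps, in order: first, record $\chi(\mathcal{O}_S)=2$ and $b_2^+(S)=3$ from the $p_g=1$, $q=0$ hypotheses. Second, substitute into Noether's formula to get $24=c_1^2(S)+e(S)$. Third, substitute $e(S)=5+b_2^-(S)$ into this and solve, obtaining $b_2^-(S)=24-c_1^2(S)-5=19-c_1^2(S)$. This is exactly the claimed identity.

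There is essentially no obstacle here; the only thing to be careful about is the bookkeeping between $c_1^2(S)$, the self-intersection $K_S^2$, and $c_2(S)=e(S)$, and the sign conventions in Noether's formula — but these are entirely standard for surfaces of general type (which are minimal Kähler with the given $p_g,q$). One could alternatively phrase the computation via the signature: $\sigma(S)=b_2^+(S)-b_2^-(S)=3-b_2^-(S)$ and the Hirzebruch signature theorem $3\sigma(S)=c_1^2(S)-2c_2(S)$, which combined with $c_2(S)=5+b_2^-(S)$ again yields $b_2^-(S)=19-c_1^2(S)$; this provides a consistency check but the Noether's formula route is the most direct.
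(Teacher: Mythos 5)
Your proof is correct. The only difference from the paper is the choice of characteristic-number identity: the paper argues via the Thom--Hirzebruch index theorem, writing $\sigma(S)=\tfrac{1}{3}\bigl(c_1^2(S)-2e(S)\bigr)$ and substituting $\sigma(S)=3-b_2^-(S)$ and $e(S)=5+b_2^-(S)$, which is exactly the ``consistency check'' you mention at the end; your primary route instead uses Noether's formula $\chi(\mathcal{O}_S)=\tfrac{1}{12}\bigl(c_1^2(S)+c_2(S)\bigr)$ together with $\chi(\mathcal{O}_S)=1-q+p_g=2$. The two arguments are essentially equivalent in content: given the K\"ahler identity $b_2^+=2p_g+1$ (which the paper records just before the lemma and which you also use), Noether's formula and the signature theorem encode the same linear relation among $c_1^2$, $e$, $\sigma$, $p_g$, $q$ for these surfaces. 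Your route leans directly on the hypotheses $p_g=1$, $q=0$ through $\chi(\mathcal{O}_S)$, while the paper's leans on them through $b_2^+(S)=3$ and $\sigma=b_2^+-b_2^-$; both require simple connectedness only to get $e(S)=5+b_2^-(S)$. Either way the bookkeeping yields $b_2^-(S)=19-c_1^2(S)$, so there is no gap.
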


\begin{proof} The Thom-Hirzebruch Index Theorem (\cite[Theorem I 3.1, p.22]{[BHPV]}) states
\begin{center}
$\sigma(S) = \frac{1}{3}(c_1^2(S) - 2 c_2(S)) = \frac{1}{3}(c_1^2(S) - 2 e(S))$.
\end{center}

The claim follows by substituting $\sigma(S) = b_2^+ - b_2^- = 3 - b_2^-$ and $e(S) = b_2^+ + b_2^- + 2 = 5 + b_2^-$.
\end{proof}

From these computations we also observe

\begin{corollary} These manifolds satisfy the Hitchin-Thorpe inequality \cite[Theorem 1]{[H]}.\\
\end{corollary}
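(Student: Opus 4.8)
The plan is to recall the precise form of the Hitchin--Thorpe inequality and then verify it for $S$ by a direct substitution of the Betti numbers already computed. Recall that \cite[Theorem 1]{[H]} asserts that a closed oriented smooth $4$-manifold $X$ admitting an Einstein metric must satisfy $2e(X) \geq 3|\sigma(X)|$; in our situation ``these manifolds'' are the complex surfaces of general type $S$ from Theorem 4, which are Einstein in an appropriate sense (or, more to the point, for which the Corollary is simply the numerical statement $2e(S) \geq 3|\sigma(S)|$), so it suffices to check this inequality.

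First I would record $e(S)$ and $\sigma(S)$ in terms of $c_1^2(S)$. From $b_2^+(S) = 3$ and Lemma 5 one gets $e(S) = 2 + b_2^+(S) + b_2^-(S) = 5 + (19 - c_1^2(S)) = 24 - c_1^2(S)$ and $\sigma(S) = b_2^+(S) - b_2^-(S) = 3 - (19 - c_1^2(S)) = c_1^2(S) - 16$. Next I would use that $S$ is one of the surfaces of Theorem 4, so $c_1^2(S) = K^2 \in \{1,2,3,4,5,6,8\}$; in particular $0 < c_1^2(S) \leq 8 < 16$, which forces $\sigma(S) < 0$, hence $|\sigma(S)| = 16 - c_1^2(S)$.

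Finally I would substitute: $2e(S) - 3|\sigma(S)| = 2\bigl(24 - c_1^2(S)\bigr) - 3\bigl(16 - c_1^2(S)\bigr) = c_1^2(S) > 0$, so the Hitchin--Thorpe inequality holds, in fact strictly. (Equivalently, one recognizes $2e(S) + 3\sigma(S) = c_1^2(S)$, which is Noether's formula for a minimal surface of general type, together with $2e(S) - 3\sigma(S) = 96 - 5c_1^2(S) \geq 56 > 0$.)

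There is no genuine obstacle here: the Corollary is pure bookkeeping built on the Thom--Hirzebruch computation of Lemma 5 and the bound $K^2 \leq 8$. The only point requiring a moment's care is the sign of $\sigma(S)$, which must be checked to be negative so that $|\sigma(S)|$ is read off correctly. It is worth flagging that the defect $2e(S) - 3|\sigma(S)| = c_1^2(S)$ being positive means the Hitchin--Thorpe inequality does \emph{not} by itself obstruct Einstein metrics on the underlying topological manifolds $3\mathbb{CP}^2 \# k\overline{\mathbb{CP}^2}$, which is precisely why the finer, gauge-theoretic obstructions invoked later (in the spirit of \cite{[RS]}) are needed for the non-existence part of Theorem 1.
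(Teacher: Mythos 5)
Your proof is correct and takes essentially the same route as the paper: the paper's own argument is the one-line computation $2\chi + 3\sigma = 19 - b_2^-(S) = c_1^2(S) > 0$ from Lemma 4, and your substitution (together with the extra care about the sign of $\sigma$ and the automatic other half $2e - 3\sigma > 0$, which the paper leaves implicit) is just a slightly more detailed version of this. One small quibble: the identity $c_1^2 = 2e + 3\sigma$ is the Thom--Hirzebruch signature formula for complex surfaces rather than Noether's formula, but this does not affect the argument.
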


\begin{proof} The claim is $2\chi + 3\sigma >0$. Indeed, we have
\begin{center}
$2\chi + 3\sigma = 19 - b_2^-$,
\end{center}

which is always a positive number for the manifolds considered in this paper.
\end{proof}

It follows from Rokhlin's Theorem \cite{[R]} that the manifolds built in \cite{[PPS]} are non-spin. We are now ready to conclude on the topological prototypes of the minimal surfaces of general type in question by using Freedman's Theorem \cite{[F]}, and Donaldson's results \cite{[D]}. The possible homeomorphism types are arranged in the following proposition.

\begin{proposition} Let $S$ be a simply connected surface of general type with $p_g = 1$, and $q = 0$. The homeomorphism type of $S$ is given as follows
\begin{itemize}

\item If $K^2_S = 1: S \cong_{C^0} 3\mathbb{CP}^2 \# 18\overline{\mathbb{CP}^2}$.
\item If $K^2_S = 2: S \cong_{C^0} 3\mathbb{CP}^2 \# 17\overline{\mathbb{CP}^2}$.
\item If $K^2_S = 3: S \cong_{C^0} 3\mathbb{CP}^2 \# 16\overline{\mathbb{CP}^2}$.
\item If $K^2_S = 4: S \cong_{C^0} 3\mathbb{CP}^2 \# 15\overline{\mathbb{CP}^2}$.
\item If $K^2_S = 5: S \cong_{C^0} 3\mathbb{CP}^2 \# 14\overline{\mathbb{CP}^2}$.
\item If $K^2_S = 6: S \cong_{C^0} 3\mathbb{CP}^2 \# 13\overline{\mathbb{CP}^2}$.
\item If $K^2_S = 8: S \cong_{C^0} 3\mathbb{CP}^2 \# 11\overline{\mathbb{CP}^2}$.
\end{itemize}
\end{proposition}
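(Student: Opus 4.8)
The plan is to combine the Betti number computations already established with the two classical classification results cited in the text: Freedman's theorem on the homeomorphism classification of simply connected closed topological 4-manifolds, and Donaldson's diagonalization theorem, which together pin down the homeomorphism type of a simply connected, non-spin 4-manifold once its intersection form is known to be indefinite.

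First I would record the numerical data. We are given $b_2^+(S) = 2p_g + 1 = 3$, and Lemma 5 gives $b_2^-(S) = 19 - c_1^2(S) = 19 - K_S^2$. Feeding in the seven admissible values $K_S^2 \in \{1,2,3,4,5,6,8\}$ from Theorem 4 yields $b_2^-(S) \in \{18,17,16,15,14,13,11\}$ respectively; in every case $b_2^-(S) \geq 1$, so the intersection form $Q_S$ is indefinite. Next I would invoke the remark (from Rokhlin's theorem) already made in the excerpt that these surfaces are non-spin, i.e. $Q_S$ is odd (odd type / not even). By the classification of indefinite unimodular symmetric bilinear forms, an indefinite odd unimodular form is isomorphic to $b_2^+\langle 1 \rangle \oplus b_2^- \langle -1 \rangle$; hence $Q_S \cong 3\langle 1\rangle \oplus b_2^-(S)\langle -1 \rangle$. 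Since $S$ is simply connected and closed, Freedman's theorem says its homeomorphism type is determined by $Q_S$ together with the Kirby--Siebenmann invariant, and for a 4-manifold admitting a smooth structure the latter vanishes; therefore $S$ is homeomorphic to the connected sum $3\mathbb{CP}^2 \# b_2^-(S)\,\overline{\mathbb{CP}^2}$, which is exactly the list in the proposition. (Strictly, one should note the logical direction: Freedman gives a homeomorphism once the forms agree, and Donaldson's theorem is what guarantees that a \emph{smooth} simply connected 4-manifold with definite form would be standard — but here the form is indefinite, so only the elementary algebraic classification of odd indefinite forms is needed, and Donaldson's input is not strictly required for this statement, though it is the right reference to cite for the smooth rigidity context.)

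Concretely, the steps in order are: (1) cite Theorem 4 for the allowed values of $K_S^2$; (2) apply Lemma 5 to compute $b_2^-(S)$ in each case and observe the form is indefinite; (3) use the non-spin observation (Rokhlin) to conclude $Q_S$ is odd; (4) apply the classification of indefinite odd unimodular forms to get $Q_S \cong 3\langle 1 \rangle \oplus b_2^-(S)\langle -1\rangle$; (5) invoke Freedman's theorem together with vanishing Kirby--Siebenmann (from smoothability) to upgrade the isomorphism of forms to a homeomorphism $S \cong_{C^0} 3\mathbb{CP}^2 \# b_2^-(S)\overline{\mathbb{CP}^2}$; (6) tabulate the seven cases.

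The only genuinely delicate point is being careful about what is actually needed versus what is cited. The hard part, such as it is, is not the topology — once $b_2^+$ and $b_2^-$ are in hand and the form is known to be odd and indefinite, Freedman's theorem does all the work — but rather making sure the hypotheses of Freedman's theorem are in place, namely simple connectivity (given for these $S$) and the vanishing of the Kirby--Siebenmann obstruction, which is automatic because a complex surface is in particular a smooth manifold. I would state this explicitly to avoid any gap, and then the proposition follows by the case-by-case substitution.
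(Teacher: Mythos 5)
Your proposal is correct and follows essentially the same route as the paper: compute $b_2^+(S)=3$ and $b_2^-(S)=19-K_S^2$, note the form is odd (non-spin, via Rokhlin) and indefinite, and conclude by Freedman's classification (with Kirby--Siebenmann vanishing by smoothability). Your parenthetical observation that Donaldson's theorem is not strictly needed here, since the indefinite odd case is handled by the algebraic classification of unimodular forms alone, is accurate and if anything slightly sharper than the paper's citation pattern.
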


\begin{remark} In particular notice that the 4-manifolds of H. Park, J. Park and D. Shin (Theorem 3) are exotic symplectic copies of the reducible manifolds of Proposition 6.
\end{remark}

\subsection{Description of the minimal surfaces of general type built by H. Park, J. Park and D. Shin}

The minimal complex surfaces of Theorem 3 have a very similar nature. We proceed to give an sketch of the construction for the example with $K^2 = 6$. The reader is referred to the quoted papers for details.\\

The starting manifold is a particular rational elliptic surface $E(1)$, which is obtained out of blowing-up a well-chosen pencil of cubics in $\mathbb{CP}^2$. Take the double cover of this rational elliptic surface $E(1)$, and call it $Y$. The complex surface $Y$ is an elliptic $K3$ surface; this complex manifold $Y$ is a common material in all of the minimal surfaces produced in \cite{[PPS]}.\\

In particular for the surface with $K^2 = 6$ we are describing in this section, one considers (within $Y$) two $I_8$-singular fibers, two $I_2$-singular fibers, one nodal singular fiber, and three sections.\\

By blowing-up $Y$ 18 times at rightly selected points (see \cite[Section 4.5, Fig. 13]{[PPS]}), one obtains a surface $Z:= Y \# 18\overline{\mathbb{CP}^2}$. The surface $Z$ contains five disjoint linear chains of $\mathbb{CP}^1$'s including the proper transforms of the sections. The linear chains are denoted by the following dual graphs, which have been labeled for the purposes in Section 3:\\

\begin{center}
$\displaystyle\circ_{G_1}^{-2} - \displaystyle\circ_{G_2}^{-2} - \displaystyle\circ_{G_3}^{-3} - \displaystyle\circ_{G_4}^{-9} - \displaystyle\circ_{G_5}^{-2} - \displaystyle\circ_{G_6}^{-2} - \displaystyle\circ_{G_7}^{-2} - \displaystyle\circ_{G_8}^{-2} - \displaystyle\circ_{G_9}^{-3} - \displaystyle\circ_{G_{10}}^{-4}$,\\

$\displaystyle\circ_{H_1}^{-2} - \displaystyle\circ_{H_2}^{-3} - \displaystyle\circ_{H_3}^{-7} - \displaystyle\circ_{H_4}^{-2} - \displaystyle\circ_{H_5}^{-2} - \displaystyle\circ_{H_6}^{-3} - \displaystyle\circ_{H_7}^{-3}$,\\

$\displaystyle\circ_{I_1}^{-7} - \displaystyle\circ_{I_2}^{-2} - \displaystyle\circ_{I_3}^{-2} - \displaystyle\circ_{I_4}^{-2}$,\\

$\displaystyle\circ_{J_1}^{-4} - \displaystyle\circ_{J_2}^{-3} - \displaystyle\circ_{J_3}^{-2}$, \\

$\displaystyle\circ_{L}^{-4}$
\end{center}

One proceeds to contract these five chains of $\mathbb{CP}^1$'s from $Z$. Since Artin's criteria is satisfied (\cite{[AR]}), the contraction produces a projective surface with special quotient singularities. Denote it by $X$. At this step, H. Park, J. Park and D. Shin use $\Q$-Gorenstein smoothings to deal with the singularities. Each singularity admits a local $\Q$-Gorenstein smoothing. In Section 3 of \cite{[PPS]}, they prove that the local smoothings can actually be glued to a global $\Q$-smoothing of the entire singular surface by proving there is no obstruction to do so. The surface of general type $S$ with $p_g = 1$, $q = 0$ and $K^2 = 6$ is a general fiber of the smoothing of $X$; in the papers of H. Park, J. Park and D. Shin, $S$ is denoted by $X_t$.\\

The argument regarding the minimality of $S$ goes as follows. Let $f:Z\rightarrow X$ be the contraction map of the chains of $\mathbb{CP}^1$'s from $Z$ to the singular surface $X$. By using the technique in, for example, Section 5 in \cite{[PPS2]}, one sees that the pullback $f^*K_X$ of the canonical divisor $K_X$ of $X$ is effective and nef. Therefore, $K_X$ is nef as well, which implies the minimality of $S$.

\section{Existence of Einstein metrics}

The existence of an Einstein metric on a certain manifold is hard to prove. In the case of interest of this paper, where the manifold is a minimal complex surface of general type that does not contain any (-2)-curves, the following criterion was found independently by T. Aubin and by S.T. Yau.

\begin{theorem} (Aubin  \cite{[Au]}, Yau \cite{[Ya]}). A compact complex manifold $(M^4, J)$ admits a compatible K\" ahler-Einstein metric with $s < 0$ if and only if its canonical
line bundle $K_M$ is ample. When such a metric exists, it is unique , up to an overall multiplicative constant.
\end{theorem}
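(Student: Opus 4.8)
The plan is to prove the non-trivial direction — that ampleness of $K_M$ produces a K\"ahler--Einstein metric with $s<0$ — by converting the Einstein equation into a complex Monge--Amp\`ere equation and solving it by the continuity method, and to dispatch uniqueness and the converse by soft arguments. First I would use the Kodaira embedding theorem to pick a K\"ahler form $\omega_0$ in the class $c_1(K_M)=-c_1(M)$ (possible exactly because $K_M$ is ample). Since $[\mathrm{Ric}(\omega_0)]=2\pi c_1(M)=-[\omega_0]$, the $\partial\bar\partial$-lemma provides a smooth function $f$ with $\mathrm{Ric}(\omega_0)=-\omega_0+i\partial\bar\partial f$, and a direct computation with $\mathrm{Ric}(\omega)=-i\partial\bar\partial\log(\omega^n/\omega_0^n)+\mathrm{Ric}(\omega_0)$ shows that a K\"ahler metric $\omega=\omega_0+i\partial\bar\partial\varphi$ is Einstein with $\mathrm{Ric}(\omega)=-\omega$ if and only if
\[ (\omega_0+i\partial\bar\partial\varphi)^n=e^{f+\varphi}\,\omega_0^n,\qquad \omega_0+i\partial\bar\partial\varphi>0 \]
(here $n=\dim_{\mathbb{C}}M=2$, but the dimension is irrelevant).

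Next I would deform the right-hand side, solving $(\omega_0+i\partial\bar\partial\varphi_t)^n=e^{tf+\varphi_t}\,\omega_0^n$ for $t\in[0,1]$; at $t=0$ the function $\varphi_t\equiv 0$ works, and I would show that the set of $t$ admitting a smooth solution is open and closed. Openness comes from the implicit function theorem in H\"older spaces: the linearization of the Monge--Amp\`ere operator at a solution is $\psi\mapsto\Delta_{\omega_t}\psi-\psi$, which has trivial kernel by the maximum principle and is therefore an isomorphism on the relevant spaces. Closedness needs uniform $C^\infty$ bounds on $\varphi_t$, obtained in the usual sequence. The $C^0$ bound is immediate here: at a maximum point of $\varphi_t$ one has $i\partial\bar\partial\varphi_t\le 0$, so $e^{tf+\varphi_t}\le 1$, giving $\sup\varphi_t\le\sup|f|$, and symmetrically $\inf\varphi_t\ge-\sup|f|$ — the sign of the $\varphi_t$ term is exactly what trivializes this step, unlike the $c_1=0$ and $c_1>0$ cases. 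Then Yau's second-order estimate controls $\mathrm{tr}_{\omega_0}\omega_t=n+\Delta_{\omega_0}\varphi_t$ in terms of $\|\varphi_t\|_{C^0}$ and a lower bound on the bisectional curvature of $\omega_0$, which together with the two-sided determinant bound gives uniform equivalence of $\omega_t$ with $\omega_0$ and hence a uniform $C^2$ bound with uniform ellipticity; the Evans--Krylov theorem (or Calabi's third-order estimate) upgrades this to a uniform $C^{2,\alpha}$ bound, and differentiating the equation and bootstrapping gives uniform bounds in every $C^k$. Hence $t=1$ is reached.

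For uniqueness I would write two K\"ahler--Einstein metrics $\omega$ and $\omega'=\omega+i\partial\bar\partial\psi$ (both with Ricci equal to minus themselves), subtract their Monge--Amp\`ere equations, and evaluate at the maximum and minimum of $\psi$ to conclude $\psi$ is constant, so the metric is unique up to an overall scale. For the converse, any K\"ahler--Einstein metric $\omega$ with $\mathrm{Ric}(\omega)=\lambda\omega$ and $s<0$ has $\lambda<0$, so $c_1(K_M)=-c_1(M)=-\tfrac{\lambda}{2\pi}[\omega]$ is a positive multiple of a K\"ahler class and $K_M$ is ample by the Kodaira embedding theorem. The one genuinely hard part is the chain of a priori estimates for the complex Monge--Amp\`ere equation — above all Yau's $C^2$ estimate and the passage from $C^2$ to $C^{2,\alpha}$ — which is the analytic heart of the Calabi-conjecture method; in the present negative case everything else, including the $C^0$ estimate, is essentially formal.
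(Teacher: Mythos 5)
This statement is the paper's Theorem 7, which the author does not prove but simply quotes, citing Aubin and Yau; your outline is precisely the standard continuity-method proof from those references --- reduction of $\mathrm{Ric}(\omega)=-\omega$ to the complex Monge--Amp\`ere equation $(\omega_0+i\partial\bar\partial\varphi)^n=e^{f+\varphi}\omega_0^n$, openness via the invertible linearization $\Delta-1$, the easy maximum-principle $C^0$ bound special to the negative case, Yau's second-order estimate plus Evans--Krylov/Calabi and bootstrapping for closedness, maximum-principle uniqueness, and Kodaira embedding for the converse --- and it is correct as a sketch at this level of detail. The only nitpicks are the harmless normalization slip (if $\omega_0\in c_1(K_M)$ then $[\mathrm{Ric}(\omega_0)]=2\pi c_1(M)=-2\pi[\omega_0]$, not $-[\omega_0]$, which a rescaling absorbs) and that in the uniqueness step you should note that two metrics normalized to Einstein constant $-1$ automatically lie in the same class $-2\pi c_1(M)$, so the $\partial\bar\partial$-lemma applies before you subtract the equations.
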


In order to apply Theorem 7, the following result needs to be proven.

\begin{proposition} There exist simply connected surfaces of general type with $p_g = 1$, $q = 0$, $K^2 = 1, 2, 3, 4, 5, 6$ or $8$, and ample canonical bundle.
\end{proposition}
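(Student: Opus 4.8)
The plan is to upgrade the surfaces $S$ produced by Theorem 3 from ``minimal of general type'' to ``minimal with ample canonical bundle,'' which by the Nakai--Moishezon criterion amounts to showing that $K_S$ is both nef and satisfies $K_S^2 > 0$ together with $K_S \cdot C > 0$ for every irreducible curve $C \subset S$. The number $K_S^2 \in \{1,2,3,4,5,6,8\}$ is positive by construction, so the real content is to rule out curves on which $K_S$ is trivial. Since $S$ is minimal of general type, $K_S$ is already nef (this was recorded in the description above, where $f^*K_X$ is shown effective and nef), so the only possible obstruction to ampleness is the existence of a $(-2)$-curve, i.e. a smooth rational curve $C \cong \mathbb{CP}^1$ with $C^2 = -2$ and hence $K_S \cdot C = 0$ by adjunction. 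Thus the proposition reduces to the single assertion: the general fiber $X_t$ of the $\mathbb{Q}$-Gorenstein smoothing of $X$ contains no $(-2)$-curve.

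The key steps I would carry out are as follows. First, recall from the $\mathbb{Q}$-Gorenstein smoothing construction that one has a flat family $\mathcal{X} \to \Delta$ with central fiber $X_0 = X$ (the singular projective surface obtained by contracting the five linear chains in $Z$) and general fiber $X_t = S$; the relative canonical class $K_{\mathcal{X}/\Delta}$ is $\mathbb{Q}$-Cartier and restricts to $K_{X_t}$ on each fiber. Second, I would invoke upper semicontinuity / specialization: if $X_t$ contained a $(-2)$-curve, a limiting argument (the curve persists in the flat limit, or one uses that $K_{X_t}^2$ is constant in the family while the existence of such a curve would be incompatible with $K_{X_0}$-positivity on the non-contracted part) forces a corresponding effective curve on $X$ orthogonal to $K_X$. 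Third, one pulls this back to $Z = Y \# 18\overline{\mathbb{CP}^2}$ via $f: Z \to X$ and checks, using the explicit list of the five chains of $\mathbb{CP}^1$'s and the known effective/nef decomposition of $f^*K_X$, that $f^*K_X$ meets every irreducible curve of $Z$ not contracted by $f$ strictly positively; the curves contracted by $f$ are exactly the components of the chains, and these become the quotient singularities of $X$, not curves on $X_t$. This is precisely the computation carried out in Section 5 of \cite{[PPS2]} and referenced in the description of the surfaces above, so I would adapt it verbatim to each of the seven cases $K^2 = 1,\dots,6,8$.

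I expect the main obstacle to be the third step: verifying case by case, for each of the seven surfaces, that no $(-2)$-curve survives the smoothing. The families in \cite{[PPS]} are built from slightly different configurations of singular fibers of the $K3$ surface $Y$ and different blow-up patterns, so the dual graphs of the contracted chains (listed above only for $K^2 = 6$) differ, and the positivity computation $f^*K_X \cdot C > 0$ must be redone in each case from the corresponding figures in \cite{[PPS], [PPS2]}. Fortunately, the minimality argument already proves $f^*K_X$ is effective and nef in every case, so one is reduced to excluding the equality $f^*K_X \cdot C = 0$; for the contracted chain components this is automatic since they are not curves on the smooth fiber, and for the remaining curves the strict inequality follows from the same nef-and-big/negativity considerations used for minimality. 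Thus the proposition follows from the construction of \cite{[PPS]} essentially by inspection, once one observes that their surfaces are not merely minimal but in fact contain no $(-2)$-curves — equivalently, that the only curves contracted by the canonical model map are the ones already contracted to build $X$, and those have been replaced by smoothable singularities rather than persisting on $X_t$.
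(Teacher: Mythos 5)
Your reduction of ampleness to the absence of curves with $K_S\cdot C=0$ (equivalently, of $(-2)$-curves, since $K_S$ is nef and $K_S^2>0$) is fine, but the proposal has a genuine gap exactly at the step that carries all the weight. You assert that ``$f^*K_X$ meets every irreducible curve of $Z$ not contracted by $f$ strictly positively'' and that this ``follows from the same nef-and-big/negativity considerations used for minimality,'' citing the computation of Section 5 of \cite{[PPS2]}. That computation only establishes that $f^*K_X$ is effective and nef, i.e.\ the inequalities $\geq 0$; it does not exclude the equality $f^*K_X\cdot C'=0$ for a non-exceptional curve, and strict positivity is precisely the statement to be proved (it is equivalent, via Nakai--Moishezon, to ampleness of $K_X$). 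The missing idea, which is the heart of the paper's proof (following R{\u a}sdeaconu--{\c S}uvaina), is this: write $f^*K_X\equiv_{\Q}\sum a_iF_i+\sum b_iG_i+\cdots+l_1L$ with \emph{all coefficients strictly positive}, note that a curve $C\subset X$ with $K_X\cdot C=0$ would force its strict transform $C'$ to have intersection number zero with every $F_i$ and every component of $Exc(f)$ (after ruling out $C'=F_{i_0}$ using nefness of $f^*K_X$), and then use the fact that the Poincar\'e duals of the $F_i$'s together with the components of $Exc(f)$ form a basis of $H^2(Z;\Q)$, so $C'$ would be numerically trivial --- a contradiction. Without this spanning/positive-coefficients argument (or some substitute), ``by inspection'' does not yield the strict inequality.

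A secondary, structural point: your specialization step (propagating a hypothetical $(-2)$-curve on $X_t$ to an effective cycle on the central fiber $X$ with $K_X\cdot C_0=0$) is both delicate --- one needs curves on fibers accumulating at $0$ and constancy of intersection numbers against the $\Q$-Cartier relative canonical, and your parenthetical justification is circular, since it appeals to positivity of $K_X$ on the non-contracted part, which is what must be proved --- and unnecessary. The paper runs the argument in the opposite, easier direction: it proves $K_X$ is ample on the singular central fiber and then invokes openness of ampleness in the $\Q$-Gorenstein smoothing to conclude that $K_{X_t}=K_S$ is ample for the general fiber. You mention the $\Q$-Cartier relative canonical class, so this route was available to you; adopting it would remove the limiting argument entirely, but you would still need the spanning argument above to establish ampleness of $K_X$ in the first place.
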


The rest of the section is devoted to such endeavor. We carry out the argument for the surface with $K^2 = 6$. The other examples can be dealt with in a similar fashion.

\subsection{Proof of Proposition 8}

The following proof follows closely the argument of R. R{\u a}sdeaconu and I. {\c S}uvaina used to prove Theorem 1.1 in \cite{[RS]}.

\begin{proof} Theorem 3 settles the existence part of the proposition. According to \cite{[PPS]}, in $Z$ there are five disjoint linear chains. Using the labels we put on their dual graphs in Section 2, let us denote them by $G  = \sum^{11}_{i = 1} G_i$, $H  = \sum^{7}_{i = 1} H_i$, $I  = \sum^{4}_{i = 1} I_i$, $J  = \sum^{3}_{i = 1} J_i$, and let $L$ be the chain of length one. Name $F_i$, $i = 1, \ldots, 11$ the eleven smooth curves of self-intersection -1 represented by dotted lines labeled -1 in Fig 13 of \cite{[PPS]}. We point out that the Poincar\'e duals of the irreducible components of the five chains and those of the curves $F_i$'s form a basis of $H^2(Z; \Q)$.

Let $f: Z\rightarrow X$ be the contraction map. Then, one has

\begin{center}
$f^* K_X \equiv_{\Q} \displaystyle\sum_{i=1}^{11} a_i F_i + \displaystyle\sum_{i=1}^{10} b_i G_i + \displaystyle\sum_{i=1}^{7} c_i H_i + \displaystyle\sum_{i=1}^{4} d_i I_i + \displaystyle\sum_{i=1}^{3} e_i J_i + l_1 L$.
\end{center}

The coefficients that appear above can be computed explicitly (see \cite{[PPS2]}). However, for our agenda it suffices to know that they are positive rational numbers. In particular the pullback of the canonical divisor of the singular variety to its minimal resolution is effective. Set the exceptional divisor of f to be $Exc(f) = \sum G_i + \sum H_i + \sum I_i + \sum J_i + L$.

%Every one of these chains satisfy Artin's contractibility criteria (\cite{[KM]}). Contracting them results in a %projective singular surface $X$ with N singularities, which then can be dealt with the 1-parameter %$\mathbb{Q}$-Gorenstein smoothings each singularity admits.\\

We wish to show that the canonical bundle $K_X$ of the $\Q$-Gorenstein smoothing is ample. This implies our claim: indeed, remember $S$ is a general fiber of the $\Q$-Gorenstein smoothing $X$, and ampleness is an open property (\cite{[KM]}). Moreover, we know $K_X$ is nef. To show it is ample as well, we proceed by contradiction.\\

Suppose $K_X$ is not ample. By its nefness and according to the Nakai-Moishezon criterion (\cite{[KM]}), there exists an irreducible curve $C \subset X$ such that $(K_X \cdot C) = 0$.\\

The total transform of $C$ in $Z$ is

\begin{center}

$f^* C \equiv_{\Q} C' + \displaystyle\sum_{i=1}^{10} w_i G_i + \displaystyle\sum_{i=1}^{7} x_i H_i + \displaystyle\sum_{i=1}^{4} y_i I_i + \displaystyle\sum_{i=1}^{3} z_i J_i + t L$.

\end{center}

Here $C'$ stands for the strict transform of $C$, and the coefficientes $w_i, x_i, y_i, z_i, t$ are non-negative rational numbers. It is straight-forward to see that $C'$ is not numerically equivalent to $0$ (\cite{[RS]}).\\

We compute

\begin{center}
$(K_X \cdot C) = (f^* K_X \cdot f^* C) = (f^* \cdot C') =$\\  $= \displaystyle\sum_{i=1}^{11} a_i (F_i \cdot C') + \displaystyle\sum_{i=1}^{10} b_i(G_i \cdot C') + \displaystyle\sum_{i=1}^{7} c_i (H_i \cdot C') +$\\ $ + \displaystyle\sum_{i=1}^{4} d_i (I_i\cdot C') + \displaystyle\sum_{i=1}^{3} e_i (J_i \cdot C')+ l_1 (L\cdot C')$.\\
\end{center}

The intersection number of the curve $C'$ with any component of the exceptional divisor $Exc(f)$ is greater or equal to zero. The equality is achieved only in the case when $C'$ is disjoint to all the irreducible components of $Exc(f)$; this is equivalent to the curve $C$ missing the singular points of $X$. This is

\begin{center}
$\displaystyle\sum_{i=1}^{10} b_i(G_i \cdot C') + \displaystyle\sum_{i=1}^{7} c_i (H_i \cdot C')  + \displaystyle\sum_{i=1}^{4} d_i (I_i\cdot C') + \displaystyle\sum_{i=1}^{3} e_i (J_i \cdot C')+ l_1 (L\cdot C') \geq 0$.
\end{center}

%Remembering that by our initial hypothesis and the Nakai-Moishezon criteria $(K_X \cdot C') = 0$.
Thus, we have $\displaystyle\sum_{i=1}^{11} a_i (F_i \cdot C') \leq 0$. At this point there are two possible scenarios.
\begin{itemize}
\item Either there is an $i_0 \in \{1, \ldots , 11\}$ such that $(C'\cdot F_{i_0} ) < 0$, or
\item the equality $(C' \cdot F_i) = 0$ holds for all $i = 1, \ldots, 11$.
\end{itemize}

The first scenario requires $C'$ to coincide with $F_{i_0}$. This is not the case, since given that $f^*K_X$ is nef, $(f^* K_X \cdot F_i) >$ holds for all $i = 1, \ldots, 11$, which is impossible by our assumption. Thus, the intersection number of the curve $C'$ with all the $F_i$'s and with all of the irreducible components of $Exc(f)$ must be zero. However, as it was remarked earlier, the Poincar\'e duals of the $F_i$'s and those of the irreducible components of $Exc(f)$ generate $H^2(Z; \Q)$. This implies that $C'$ would have to be numerically trivial on $Z$. This is a contradiction.\\

Thus, $K_X$ is ample. The proposition now follows from Aubin-Yau's criterion (Theorem 7).
\end{proof}

\begin{corollary} There exist a minimal complex structure  on $3\mathbb{CP}^2 \# k\overline{\mathbb{CP}^2}$, for each $k = 11, 13, 14, 15, 16, 17, 18$, which admits a K\"ahler-Einstein metric of negative scalar curvature.
\end{corollary}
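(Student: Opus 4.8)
The plan is to obtain Corollary 9 as a direct synthesis of Proposition 6, Proposition 8 and the Aubin--Yau criterion (Theorem 7), with essentially no additional work.

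First I would fix $k \in \{11,13,14,15,16,17,18\}$ and set $K^2 := 19 - k$, so that $K^2 \in \{1,2,3,4,5,6,8\}$. By Proposition 8 there is a simply connected minimal surface of general type $S$ with $p_g = 1$, $q = 0$, $K^2_S = 19-k$ and ample canonical bundle $K_S$. By Proposition 6, the surface $S$ is homeomorphic to $3\mathbb{CP}^2 \# k\overline{\mathbb{CP}^2}$. Pulling back the complex structure of $S$ along this homeomorphism therefore equips the topological $4$-manifold $3\mathbb{CP}^2 \# k\overline{\mathbb{CP}^2}$ with a complex structure whose canonical bundle is ample.

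Next I would observe that this complex structure is minimal. Indeed, if $E \subset S$ were a smooth rational $(-1)$-curve, then adjunction would force $K_S \cdot E = -1 < 0$, contradicting the ampleness of $K_S$; hence $S$ contains no exceptional curve and coincides with its own minimal model. (This is also a consequence of the nefness of $K_S$ already recorded in Section 2.) Then, since $K_S$ is ample, Theorem 7 provides a compatible K\"ahler--Einstein metric of scalar curvature $s < 0$ on $S$, and so on $3\mathbb{CP}^2 \# k\overline{\mathbb{CP}^2}$ via the homeomorphism of Proposition 6.

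The only point that requires care is the elementary bookkeeping $k \leftrightarrow 19 - k = K^2_S$ furnished by Proposition 6 (equivalently by Lemma 4); all of the substantive content has already been established in Proposition 8. Consequently I do not expect any genuine obstacle here: the corollary is a formal consequence of the preceding results.
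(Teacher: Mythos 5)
Your proposal is correct and follows essentially the same route as the paper, which states the corollary as an immediate consequence of Proposition 8 (ampleness of $K_S$), Proposition 6 (the homeomorphism type), and the Aubin--Yau criterion (Theorem 7), with minimality already supplied by the construction of \cite{[PPS]}. The only caveat is one of phrasing: one cannot literally pull back a complex structure along a homeomorphism, so the statement should be read (as the paper intends) as saying that the topological manifold $3\mathbb{CP}^2 \# k\overline{\mathbb{CP}^2}$ carries a smooth structure, namely that of $S$, supporting a minimal complex structure with a K\"ahler--Einstein metric of negative scalar curvature.
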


\section{Non-existence of Einstein metrics: Exotic smooth structures}

Topologically there is no obstruction for the existence of an Einstein metric on the surfaces of general type we are working with (cf. Corollary 5). We now proceed to study the non-existence of Einstein metrics with respect to their exotic differential structures.\\

When one considers different smooth structures on 4-manifolds, the main obstruction to the existence of an Einstein metric is the following result, which generalizes work done by C. LeBrun in \cite{[LB]} and by D. Kotschick \cite{[DK]}.

\begin{theorem}{\label{Theorem C}} (LeBrun, \cite{[LB4]}). Let $X$ be a compact oriented 4-manifold with a non-trivial Seiberg-Witten invariant and with $(2\chi + 3\sigma)(X) > 0$. Then
\begin{center}
$M = X \# r \overline{\mathbb{CP}^2}$
\end{center}
does not admit an Einstein metric if $r \geq \frac{1}{3}(2\chi + 3\sigma)(X)$.
\end{theorem}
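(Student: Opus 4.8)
The plan is to combine Seiberg--Witten theory with C.~LeBrun's curvature estimates. The hypothesis that $X$ has a non-trivial Seiberg--Witten invariant furnishes a $\mathrm{spin}^{c}$ structure $\mathfrak{s}$ on $X$ with non-vanishing invariant; one may assume $b^{+}(X)\geq 2$, which holds in all the applications in this paper and which implies $b^{+}(M)=b^{+}(X)\geq 2$ because $b^{+}(\overline{\mathbb{CP}^2})=0$. The Chern class $\mathfrak{c}:=c_{1}(\mathfrak{s})$ is then a \emph{monopole class}, and since a non-vanishing invariant forces the expected dimension $\tfrac14\big(\mathfrak{c}^{2}-(2\chi+3\sigma)(X)\big)$ of the monopole moduli space to be non-negative, $\mathfrak{c}^{2}\geq(2\chi+3\sigma)(X)>0$. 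First I would invoke the blow-up formula for Seiberg--Witten invariants: writing $E_{1},\dots,E_{r}\in H^{2}(M;\Z)$ for the classes of the $r$ exceptional spheres, with $E_{i}\cdot E_{j}=-\delta_{ij}$, each of the $2^{r}$ pairwise distinct classes $\mathfrak{c}+\sum_{i=1}^{r}\varepsilon_{i}E_{i}$, $\varepsilon\in\{\pm1\}^{r}$, is a monopole class of $M=X\#r\overline{\mathbb{CP}^2}$.

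The analytic engine is LeBrun's estimate: for any monopole class $\mathfrak{b}$ of $M$ and any Einstein metric $g$ on $M$, a refined Weitzenb\"ock argument exploiting the action of the self-dual Weyl curvature $W^{+}$ on the self-dual curvature form of a monopole solution, combined with the Gauss--Bonnet and signature formulas --- which for an Einstein $4$-manifold give $(2\chi+3\sigma)(M)=\frac{1}{4\pi^{2}}\int_{M}\big(\tfrac{s^{2}}{24}+2|W^{+}|^{2}\big)\,d\mu$ since the trace-free Ricci curvature vanishes --- yields
\[
(2\chi+3\sigma)(M)\ \geq\ \tfrac23\,\big(\mathfrak{b}^{+}\big)^{2},
\]
where $\mathfrak{b}^{+}$ denotes the $g$-self-dual part of the harmonic representative of $\mathfrak{b}$. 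I would apply this with $\mathfrak{b}=\mathfrak{c}+\sum_{i}\varepsilon_{i}E_{i}$ and average over the $2^{r}$ sign vectors: since $\big(\mathfrak{c}+\sum_{i}\varepsilon_{i}E_{i}\big)^{+}=\mathfrak{c}^{+}+\sum_{i}\varepsilon_{i}E_{i}^{+}$ and the average of $\varepsilon_{i}\varepsilon_{j}$ over $\{\pm1\}^{r}$ is $\delta_{ij}$, the cross terms cancel, giving
\[
(2\chi+3\sigma)(M)\ \geq\ \tfrac23\Big(\big(\mathfrak{c}^{+}\big)^{2}+\sum_{i=1}^{r}\big(E_{i}^{+}\big)^{2}\Big)\ \geq\ \tfrac23\big(\mathfrak{c}^{+}\big)^{2}\ \geq\ \tfrac23\,\mathfrak{c}^{2}\ \geq\ \tfrac23\,(2\chi+3\sigma)(X),
\]
where I used $\big(\mathfrak{c}^{+}\big)^{2}\geq\mathfrak{c}^{2}$ (the anti-self-dual part of a harmonic form contributes non-positively to its self-intersection) and $\big(E_{i}^{+}\big)^{2}\geq 0$.

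The rest is bookkeeping. Each connected sum with $\overline{\mathbb{CP}^2}$ lowers $2\chi+3\sigma$ by $1$, so $(2\chi+3\sigma)(M)=(2\chi+3\sigma)(X)-r$; inserting this into the previous display gives $r\leq\tfrac13(2\chi+3\sigma)(X)$, already proving the theorem when $r>\tfrac13(2\chi+3\sigma)(X)$. To reach the stated non-strict bound I would rule out the borderline case $r=\tfrac13(2\chi+3\sigma)(X)$. In that case every inequality above is an equality; as the averaged quantity equals $(2\chi+3\sigma)(M)$ while each averaged summand is bounded above by it, equality holds in LeBrun's estimate for each monopole class $\mathfrak{c}+\sum_{i}\varepsilon_{i}E_{i}$, and the rigidity statement accompanying that estimate forces $(M,g)$ to be, up to scale, K\"ahler--Einstein. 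Such a $g$ cannot have $s\geq 0$: a $4$-manifold with $b^{+}\geq 2$ and a monopole class carries no metric of positive scalar curvature, while $s\equiv 0$ is incompatible with $(2\chi+3\sigma)(M)=\tfrac23(2\chi+3\sigma)(X)>0$. So $s<0$, whence by the Aubin--Yau criterion (Theorem~7) the bundle $K_{M}$ is ample and $M$ is a minimal surface of general type, whose only Seiberg--Witten basic classes are $\pm K_{M}$. This is impossible, since the distinct classes $\mathfrak{c}+E_{1}$ and $\mathfrak{c}-E_{1}$ would then both lie in $\{K_{M},-K_{M}\}$, forcing $2\mathfrak{c}=0$ or $2E_{1}=0$ --- contradicting $\mathfrak{c}^{2}>0$ and $E_{1}^{2}=-1$. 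Hence $r<\tfrac13(2\chi+3\sigma)(X)$, i.e.\ $M$ carries no Einstein metric once $r\geq\tfrac13(2\chi+3\sigma)(X)$.

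The genuine difficulty is the estimate $(2\chi+3\sigma)(M)\geq\tfrac23(\mathfrak{b}^{+})^{2}$ and its rigidity: it rests on the refined Seiberg--Witten Weitzenb\"ock inequality bounding $\int_{M}\big(\tfrac{s^{2}}{24}+2|W^{+}|^{2}\big)\,d\mu$ from below, which one can picture as coupling Witten's estimate $\int_{M}s^{2}\,d\mu\geq 32\pi^{2}(\mathfrak{b}^{+})^{2}$ with a lower bound on $\int_{M}|W^{+}|^{2}\,d\mu$ coming from the curvature term in the monopole equations. This is precisely the input the present paper takes from \cite{[LB4]}; everything else above is formal.
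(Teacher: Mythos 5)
The paper itself offers no proof of this statement: Theorem~10 is quoted verbatim from LeBrun \cite{[LB4]} and used as a black box, so there is no internal argument to compare yours against. What you have written is, in outline, LeBrun's own proof from that reference: the Fintushel--Stern blow-up formula produces the $2^{r}$ monopole classes $\mathfrak{c}+\sum_{i}\varepsilon_{i}E_{i}$ on $M$, the refined curvature estimate $\int_{M}\bigl(s-\sqrt{6}\,|W^{+}|\bigr)^{2}d\mu\geq 72\pi^{2}(\mathfrak{b}^{+})^{2}$ combined with the Gauss--Bonnet/signature formula for Einstein metrics gives $(2\chi+3\sigma)(M)\geq\tfrac{2}{3}(\mathfrak{b}^{+})^{2}$ (your implicit pointwise inequality $\tfrac{s^{2}}{24}+2|W^{+}|^{2}\geq\tfrac{1}{27}\bigl(s-\sqrt{6}|W^{+}|\bigr)^{2}$ is sharp, so the constant $\tfrac23$ is right), your averaging over sign vectors is a legitimate variant of LeBrun's convex-hull/sign-choice trick, and the bookkeeping $(2\chi+3\sigma)(M)=(2\chi+3\sigma)(X)-r$ together with $(\mathfrak{c}^{+})^{2}\geq\mathfrak{c}^{2}\geq(2\chi+3\sigma)(X)$ is correct. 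So the proposal is essentially a faithful reconstruction of the cited proof rather than an independent one -- which is the honest best one can do here, since the analytic estimate \emph{is} the content of \cite{[LB4]}.

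Three points should be tightened. First, be careful which form of the estimate you quote: the version stated for a monopole class $\mathfrak{a}$ with the hypothesis $\mathfrak{a}^{2}\geq 0$ and $\mathfrak{a}^{2}$ on the right does not directly apply to your classes $\mathfrak{b}_{\varepsilon}$, whose squares $\mathfrak{c}^{2}-r$ may well be negative in the relevant range of $r$; you need the $(\mathfrak{b}^{+})^{2}$-form (or LeBrun's convex-hull formulation, where the barycenter of your $2^{r}$ classes is exactly the pulled-back $\mathfrak{c}$), which is available but should be cited as such. Second, in the borderline case $r=\tfrac13(2\chi+3\sigma)(X)$ the phrase ``the rigidity statement accompanying that estimate'' carries real weight: that equality forces $g$ to be K\"ahler--Einstein requires the equality analysis in \cite{[LB4]} (pointwise $s=-8\sqrt{6}\,|W^{+}|$, equality in the monopole estimate, and an almost-K\"ahler-to-K\"ahler step for Einstein metrics); this is not formal and should be referenced precisely. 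Third, a small slip: for $r\geq 2$ the classes $\mathfrak{c}\pm E_{1}$ are not themselves monopole classes of $M$; take instead two of your classes differing only in the sign of $E_{1}$. If both lay in $\{K_{M},-K_{M}\}$, either their difference $2E_{1}$ vanishes (impossible, $E_{1}^{2}=-1$) or their sum vanishes, giving $\mathfrak{c}=-\sum_{i\geq 2}\varepsilon_{i}E_{i}$ and hence $\mathfrak{c}^{2}=-(r-1)\leq 0$, contradicting $\mathfrak{c}^{2}\geq(2\chi+3\sigma)(X)>0$; with this repair your final contradiction goes through. The tacit assumption $b^{+}(X)\geq 2$ (needed to speak of metric-independent Seiberg--Witten invariants) matches both \cite{[LB4]} and every application in this paper, so it is harmless, though it is worth stating.
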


As a corollary we have.

\begin{proposition} Let $9\leq k\leq 18$. The topological manifolds
\begin{center}
$3\mathbb{CP}^2 \# k \overline{\mathbb{CP}^2}$
\end{center}
support infinitely many smooth structures that do not admit an Einstein metric. Moreover, each of these manifolds admits infinitely many smooth structures, all of which have negative Yamabe invariant, and on which there are no non-singular solutions to the normalized Ricci flow for any initial metric.
\end{proposition}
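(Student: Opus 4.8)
The plan is to deduce Proposition 11 from Theorem 10 (LeBrun's obstruction) together with the existence of exotic smooth structures on $3\mathbb{CP}^2 \# k\overline{\mathbb{CP}^2}$ coming from the Akhmedov--Park constructions cited as \cite{[AP]}. First I would fix $k$ with $9 \leq k \leq 18$ and produce a compact oriented 4-manifold $X_k$ with the homeomorphism type of $3\mathbb{CP}^2 \# (k-9)\overline{\mathbb{CP}^2}$ that carries a nontrivial Seiberg--Witten invariant — for instance a minimal symplectic (or complex) model, which exists in this range by \cite{[AP]} (for small $k-9$) and, when $k = 18, 17, \ldots$ already by the surfaces of general type of Proposition 6 and their blow-downs. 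For such an $X_k$ one computes $(2\chi + 3\sigma)(X_k) = 2\chi + 3\sigma$ of $3\mathbb{CP}^2 \# (k-9)\overline{\mathbb{CP}^2} = 3\mathbb{CP}^2$'s contribution $2(2 + 3) + 3(3-3) \cdots$; more precisely $(2\chi+3\sigma)(3\mathbb{CP}^2 \# j\overline{\mathbb{CP}^2}) = 2(j+5) + 3(3-j) = 19 - j$, so with $j = k-9$ we get $(2\chi+3\sigma)(X_k) = 28 - k > 0$ for $k \leq 18$, and $\tfrac{1}{3}(2\chi+3\sigma)(X_k) = \tfrac{28-k}{3} \leq 9 - (k-9) = r$ precisely when $r = k - (k-9) = 9$, i.e. writing $M = X_k \# 9\overline{\mathbb{CP}^2}$ and checking $9 \geq \tfrac{28-k}{3}$, which holds for $k \geq 1$. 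Thus Theorem 10 applies and $M = X_k \# 9\overline{\mathbb{CP}^2}$, which is homeomorphic to $3\mathbb{CP}^2 \# k\overline{\mathbb{CP}^2}$, admits no Einstein metric.

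To get \emph{infinitely many} such smooth structures I would invoke the infinite families of pairwise non-diffeomorphic minimal symplectic 4-manifolds homeomorphic to $X_k$ produced in \cite{[AP]} (distinguished by their Seiberg--Witten invariants, e.g. via knot surgery or logarithmic transforms), blow each up nine times, and observe that the resulting manifolds are pairwise non-diffeomorphic — the number of Seiberg--Witten basic classes is a diffeomorphism invariant that survives blow-ups, so distinct members of the family remain distinct after connect-summing with $9\overline{\mathbb{CP}^2}$. Each member still has $(2\chi+3\sigma) > 0$ and a nontrivial Seiberg--Witten invariant, so Theorem 10 again forbids Einstein metrics.

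For the Yamabe and normalized Ricci flow statements I would use the standard consequences of a nonzero (mod-$2$ or integer) Seiberg--Witten invariant: a $4$-manifold that is, up to orientation, of the form (minimal symplectic with $b_2^+ > 1$) $\#\, r\overline{\mathbb{CP}^2}$ has negative Yamabe invariant, because its Yamabe invariant is bounded above by $-\sqrt{32\pi^2 c_1^2}$-type curvature estimates à la LeBrun \cite{[LB4]}, \cite{[LB]}, and these estimates are insensitive to blow-ups; concretely the monopole classes of the minimal model $X_k$ force $\mathcal{Y}(M) < 0$. The obstruction to non-singular solutions of the normalized Ricci flow is the one developed in \cite{[MI]} (and used in \cite{[IRS]}): if a non-singular solution existed, then a curvature-integral / Gromov--Hausdorff collapsing argument would give $\tfrac{1}{3}(2\chi + 3\sigma)(M) \geq \tfrac{1}{3}c_1^2(X_k^{\min})$ type inequality, i.e. essentially the same numerical inequality $r \geq \tfrac{1}{3}(2\chi+3\sigma)(X_k)$ that already rules out Einstein metrics would be violated in the reverse direction, forcing every solution to be singular. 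So I would cite the relevant theorem from \cite{[MI]} verbatim and check that our $M = X_k \# 9\overline{\mathbb{CP}^2}$ meets its hypotheses — which, being the same monopole-class hypotheses, it does.

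The main obstacle I anticipate is purely bookkeeping rather than conceptual: one must verify that the Akhmedov--Park constructions actually supply infinite families with the precise homeomorphism type $3\mathbb{CP}^2 \# (k-9)\overline{\mathbb{CP}^2}$ for every $k$ in the range $9 \leq k \leq 18$ (this is exactly where one uses \cite{[AP]}, and for the larger $k$ one can alternatively blow down the surfaces of general type from Section 2), and that the numerical inequality $r \geq \tfrac{1}{3}(2\chi+3\sigma)(X_k)$ in Theorem 10 is satisfied with room to spare — which, with $X_k$ the minimal model homeomorphic to $3\mathbb{CP}^2 \# (k-9)\overline{\mathbb{CP}^2}$ and $r = 9$, reduces to $27 \geq 28 - k$, i.e. $k \geq 1$, so it holds throughout. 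Once these are in place, Proposition 11 follows by assembling Theorem 10, the Yamabe estimate, and the normalized-Ricci-flow obstruction of \cite{[MI]}.
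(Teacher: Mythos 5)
Your overall strategy---LeBrun's obstruction (Theorem 10) applied to blow-ups of manifolds with nontrivial Seiberg--Witten invariants, with the blow-up formula keeping an infinite family pairwise non-diffeomorphic---is the right one and is the paper's. However, your specific parametrization has a genuine gap: you fix the number of blow-ups at $r=9$ and therefore need, for every $9\leq k\leq 18$, a base manifold $X_k$ with nonzero Seiberg--Witten invariant homeomorphic to $3\mathbb{CP}^2\#(k-9)\overline{\mathbb{CP}^2}$, i.e.\ for every $j=k-9\in\{0,1,\ldots,9\}$. For $k=9$ this demands a manifold with nontrivial SW invariant homeomorphic to $3\mathbb{CP}^2$ itself, which \cite{[AP]} does not provide and which is not known to exist (an exotic $3\mathbb{CP}^2$ is a well-known open problem); the cases $j=1,2,3$ are likewise not supplied by \cite{[AP]}, whose relevant output here is an infinite family of pairwise non-diffeomorphic minimal 4-manifolds homeomorphic to $3\mathbb{CP}^2\#4\overline{\mathbb{CP}^2}$. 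Your fallback for the larger values of $k$---``blow-downs of the surfaces of general type of Proposition 6''---does not work either: those surfaces are minimal, so there is nothing to blow down, and in any case their homeomorphism types are $3\mathbb{CP}^2\#k\overline{\mathbb{CP}^2}$ with $k\geq 11$, not $3\mathbb{CP}^2\#(k-9)\overline{\mathbb{CP}^2}$.

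The repair is the paper's parametrization: keep the base fixed and vary $r$. Take the single infinite family $\{X_n\}$ of \cite{[AP]} homeomorphic to $3\mathbb{CP}^2\#4\overline{\mathbb{CP}^2}$, for which $(2\chi+3\sigma)=19-4=15$, hence $\tfrac{1}{3}(2\chi+3\sigma)=5$; then for every $r\geq 5$ the manifolds $X_n\# r\overline{\mathbb{CP}^2}$ are homeomorphic to $3\mathbb{CP}^2\#(4+r)\overline{\mathbb{CP}^2}$, which covers exactly the range $k\geq 9$, and Theorem 10 excludes Einstein metrics. The Fintushel--Stern blow-up formula \cite[Theorem 1.4]{[FS]} shows $\{X_n\# r\overline{\mathbb{CP}^2}\}$ remains pairwise non-diffeomorphic---this is the same point as your remark that basic classes survive blow-ups, so that part of your argument is sound. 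For the Yamabe sign and the normalized Ricci flow, rather than re-deriving a collapsing inequality, one quotes LeBrun \cite{[LB]}, \cite{[LB2]} for negativity of the Yamabe invariant (nonzero SW invariant and $c_1^2>0$ of the minimal model) and Ishida's Theorem B in \cite{[MI]} for the non-existence of non-singular solutions, checking its hypotheses for $X_n\# r\overline{\mathbb{CP}^2}$, $r\geq 5$, exactly as for the Einstein obstruction; this is how the paper concludes in Section 5.
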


\begin{proof} We make use of the infinite family $\{X_n\}$ of pairwise non-diffeomorphic 4-manifolds (with non-trivial SW) sharing the topological prototype
$3\mathbb{CP}^2 \# 4\overline{\mathbb{CP}^2}$ built in \cite{[AP]}. The first part of the lemma now follows by setting $r\geq 5$ in LeBrun's result (Theorem \ref{Theorem C}); notice that the blow-up formula \cite[Theorem 1.4]{[FS]} allows us to conclude that the manifolds in the infinite family $\{X_n \# (4 + r)\overline{\mathbb{CP}^2}\}$ are pairwise non-diffeomorphic. For the claims regarding the Yamabe invariant and the solutions to the normalized Ricci flow see Section 5 below.
\end{proof}

%The $3\mathbb{CP}^2 \# 19 \overline{\mathbb{CP}}^2$ and $3\mathbb{CP}^2 \# 20 \overline{\mathbb{CP}}^2$

\subsection{Non-existence of anti-self-dual Einstein metrics} Using another obstruction theorem of LeBrun in \cite{[LB4]} we obtain the following lemma.

\begin{lemma}Let $7\leq k\leq 18$. The topological manifolds
\begin{center}
$3\mathbb{CP}^2 \# k \overline{\mathbb{CP}^2}$
\end{center}
support infinitely many smooth structures that do not admit an anti-self-dual Einstein metric.
\end{lemma}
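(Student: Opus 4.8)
The plan is to invoke a second obstruction theorem of LeBrun, this time the one involving both Seiberg-Witten basic classes \emph{and} the Weyl curvature / self-dual part of the intersection form, which rules out anti-self-dual Einstein metrics on manifolds of the form $X \# r\overline{\mathbb{CP}^2}$ whenever $r$ is sufficiently large relative to the Seiberg-Witten-theoretic invariants of $X$. Concretely, LeBrun proved in \cite{[LB4]} that if $X$ has a monopole class with $c_1^2 = 2\chi + 3\sigma(X) > 0$, then $X \# r\overline{\mathbb{CP}^2}$ admits no anti-self-dual Einstein metric once $r \geq \tfrac{1}{3}(2\chi+3\sigma)(X)$ is replaced by the sharper threshold coming from the anti-self-dual refinement (the bound is of the form $r \geq \tfrac{1}{3}(2\chi+3\sigma)(X) - $ a constant, so the anti-self-dual obstruction kicks in \emph{earlier}, i.e. for smaller $r$, than the plain Einstein obstruction of Theorem \ref{Theorem C}). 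This is exactly why the range improves from $k \geq 9$ in Proposition 11 to $k \geq 7$ here.

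The key steps, in order, are as follows. First I would take the same infinite family $\{X_n\}$ of pairwise non-diffeomorphic symplectic 4-manifolds homeomorphic to $3\mathbb{CP}^2 \# 4\overline{\mathbb{CP}^2}$ constructed in \cite{[AP]}, which have non-trivial Seiberg-Witten invariants; being symplectic with $b_2^+ > 1$, each $X_n$ carries a monopole class (the canonical class) and satisfies $(2\chi+3\sigma)(X_n) = 2\cdot 9 + 3\cdot(-1) = 15 > 0$. Second, I would form $M_{n,r} := X_n \# r\overline{\mathbb{CP}^2}$ and observe that $M_{n,r}$ is homeomorphic to $3\mathbb{CP}^2 \# (4+r)\overline{\mathbb{CP}^2}$, so setting $k = 4 + r$ with $r \geq 3$ covers $k \geq 7$. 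Third, I would apply LeBrun's anti-self-dual obstruction from \cite{[LB4]}: with $(2\chi+3\sigma)(X_n) = 15$, the relevant threshold for non-existence of an anti-self-dual Einstein metric is met for $r \geq 3$ (one should cite the precise inequality in \cite{[LB4]}; the point is that $15 - 3r \leq 0$ already at $r = 5$ for the crude Einstein bound, while the anti-self-dual version is satisfied for $r \geq 3$). Fourth, I would invoke the blow-up formula \cite[Theorem 1.4]{[FS]} to guarantee that the $M_{n,r}$, for fixed $r$ and varying $n$, remain pairwise non-diffeomorphic (their Seiberg-Witten invariants, which distinguish the $X_n$, are preserved up to the predictable blow-up modification), so there are genuinely infinitely many distinct smooth structures for each $k$ in the stated range.

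The main obstacle I expect is pinning down the exact numerical threshold in LeBrun's anti-self-dual obstruction theorem and verifying that it indeed yields $k \geq 7$ rather than $k \geq 8$ or $k \geq 9$ — this requires carefully unwinding which refinement of the curvature estimate LeBrun uses (the one incorporating $\int |W^+|^2$ bounds for anti-self-dual metrics forces $W^+ \equiv 0$, and then the Gauss-Bonnet/signature combination $2\chi + 3\sigma \geq \tfrac{1}{3}(\text{monopole class})^2$ must fail). A secondary subtlety is confirming that the manifolds in $\{X_n\}$ of \cite{[AP]} genuinely have the stated $b_2^+$ and signature and admit the requisite monopole class with $b_2^+ > 1$ so that the Seiberg-Witten invariants are diffeomorphism invariants in the strong sense needed; but since these are symplectic manifolds with $b_2^+ = 3 > 1$, Taubes' theorem supplies the monopole class and this step is routine. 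Everything else — the homeomorphism bookkeeping and the blow-up formula application — is identical to the argument already given for Proposition 11.
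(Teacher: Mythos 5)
Your overall strategy is exactly the one the paper gestures at (the paper offers no proof of this lemma beyond the sentence ``using another obstruction theorem of LeBrun in [LB4]''): take the Akhmedov--Park family $\{X_n\}$ homeomorphic to $3\mathbb{CP}^2\#4\overline{\mathbb{CP}^2}$, blow up, apply LeBrun's anti-self-dual refinement, and separate the smooth structures via the blow-up formula. The gap is the numerical threshold, which you yourself flag as the main obstacle and then assert rather than verify. LeBrun's refinement in [LB4] is the curvature estimate $\int_M (s-\sqrt{6}|W^+|)^2\,d\mu \geq 72\pi^2 (c^+)^2$ for a monopole class; for an Einstein metric with $W^+\equiv 0$ this gives $\int s^2 \geq 72\pi^2\, c_1^2(X_n)$, and combined with the Gauss--Bonnet/signature identity $(2\chi+3\sigma)(M)=\frac{1}{96\pi^2}\int s^2$ (valid since $\mathring{r}=0$ and $W^+=0$) one obtains $(2\chi+3\sigma)(X_n)-r \geq \frac{3}{4}(2\chi+3\sigma)(X_n)$, i.e. $r \leq \frac{1}{4}(2\chi+3\sigma)(X_n)=\frac{15}{4}$. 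So the non-existence conclusion is available only for $r\geq 4$, that is $k\geq 8$, not $r\geq 3$/$k\geq 7$ as you claim. Your parenthetical guess that the anti-self-dual bound is ``$\frac{1}{3}(2\chi+3\sigma)(X)$ minus a constant'' is also structurally off: the improvement is a different multiplicative constant ($\frac14$ in place of $\frac13$), not a subtractive one.

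It is worth noting that the paper is internally inconsistent on precisely this point: the introduction states the anti-self-dual non-existence result ``for $k\geq 8$,'' while the lemma as printed says $7\leq k\leq 18$, and no argument for $k=7$ appears anywhere. So your write-up, once the threshold is corrected, proves the statement for $8\leq k\leq 18$ (the remaining steps --- the homeomorphism count $k=4+r$, Taubes/nontriviality of the Seiberg--Witten invariants of the symplectic $X_n$ with $b_2^+=3$, and the Fintushel--Stern blow-up formula keeping the $X_n\#r\overline{\mathbb{CP}^2}$ pairwise non-diffeomorphic --- are fine), but the $k=7$ case is not established by this route; reaching it would require either a sharper curvature estimate than the one in [LB4] or an infinite family with nontrivial Seiberg--Witten invariants on a smaller topological model (e.g. one homeomorphic to $3\mathbb{CP}^2\#3\overline{\mathbb{CP}^2}$, so that $c_1^2=16$), neither of which is invoked in the paper.
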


\section{Proof of Proposition 2}

The following argument is based on the proof of Theorem B in \cite{[IRS]}.

\begin{proof} We start with the part of (1) concerning the sign of the Yamabe invariant. Consider the smooth structure related to the minimal surfaces of general type taken from \cite{[PPS]}. By \cite{[LB2]} their Yamabe invariant is negative. Regarding the existence of non-singular solutions to the normalized Ricci flow, it follows from Cao's theorem (\cite{[C1]}, \cite{[C2]}) by taking as an initial metric the K\"ahler metric with K\"ahler form the cohomology class of the canonical line bundle.\\

For Property (2), consider the smooth structures used in Theorem 1 that were built by A. Akhmedov and B.D. Park in \cite{[AP]}: the infinite family $\{X_n\}$ of pairwise non-diffeomorphic minimal manifolds homeomorphic to $3\mathbb{CP}^2 \# 4 \overline{\mathbb{CP}^2}$ . These manifolds have non-trivial Seiberg-Witten invariants, and for all of them $c_1^2 > 0$ holds. Thus, by \cite{[LB]}, their Yamabe invariant is strictly negative. By a result of M. Ishida (Theorem B in \cite{[MI]}), there are no solutions to the normalized Ricci flow on $X_i$ for any $i$ and any intial metric.
\end{proof}

\end{document}